\newcommand{\R}{{\mat R}}
\newcommand{\C}{{\mat C}}
\newcommand{\be}{\begin{eqnarray}}
\newcommand{\ben}{\begin{eqnarray*}}
\newcommand{\en}{\end{eqnarray}}
\newcommand{\enn}{\end{eqnarray*}}
\newcommand{\mat}{\mathbb}
\newtheorem{theorem}{Theorem}[section]
\newtheorem{lemma}[theorem]{Lemma}
\newtheorem{remark}[theorem]{Remark}
\definecolor{rot}{rgb}{1,0,0}
\definecolor{hw}{rgb}{0,0,1}
\begin{document}
\renewcommand{\theequation}{\arabic{section}.\arabic{equation}}
\title{\bf
 Increasing stability for inverse acoustic source
problems 
}
\author{ Suliang Si\thanks{School of Mathematics and Statistics, Shandong University of Technology,
Zibo, 255000, China ({\tt sisuliang@amss.ac.cn})}}
\date{}



\maketitle

\begin{abstract}
In this paper, we show the increasing stability of the inverse source problems for the acoustic wave
equation in the full space $\R^3$. The goal is to understand increasing
stability for wave equation in the time domain. If the time and spatial
variables of the source term can be separated with compact support, the increasing stability estimates of the $L^2$-norm of the acoustic source
function can be established. The stability estimates consist of
two parts: the Lipschitz type data discrepancy and the high time tail of the
source functions. As the time increases, the latter decreases
and thus becomes negligible. 
\end{abstract}

%


\section{Introduction}
\noindent 

Consider the time-dependent wave propagation with an acoustic source modeled by
\begin{equation}\label{eq1}
  \partial_t^2U(x,t) -\Delta U(x,t)=F(x,t),\qquad  x\in\mathbb{R}^3,\ t>0,
\end{equation}
where $U$ denotes the wave field and $F$ is the acoustic source. Together with the above governing equation, we impose the homogeneous initial conditions
\begin{equation}\label{eq2}
   U(x,0)=\partial_t U(x,0)=0,\qquad x\in\mathbb{R}^3.
\end{equation}
Specifically, we suppose that the dependence of the source term on time and 
space variables are separated, that is
\begin{equation}\nonumber
F(x,t)=f(x)g(t),
\end{equation}
where $f$ is compactly supported in the space region $B_R=\{x\in\R^3: \ |x|<R\}$ with $R>0$.

The well known representation for $u$ to the wave equation (\ref{eq1}) gives
\begin{equation}\label{U}
U(x,t)=\frac{1}{4\pi}\int_{\R^3}\frac{f(y)g(t-|x-y|)}{|y-x|}dy.
\end{equation}
In many applications, the temporal function $g(t)$ 
is usually given by the Ricker wavelet
\[g(t)=(1-2(\pi f_p(t-t_0))^2)e^{-(\pi f_p(t-t_0))^2}\]
with the center frequency $f_p$ and the delay time $t_0$.

In this
paper under the assumption that 
$g(t)=e^{-\gamma t}$ for constant $\gamma>0$,
we prove increasing stability for the inverse problem of recovering the source function $f(x)$ by the measurements, namely,
\begin{equation}\nonumber
u(x,t), \quad (x,t)\in\partial B_R\times(0,T),
\end{equation}
where  $T>1$.

Throughout the paper, $a\lesssim b$ stands for $a\leq Cb$, where $C>0$ is a constant
independent of $\gamma$, $R$. Now we introduce the main stability result of this paper.
\begin{theorem}\label{1}
Let $\|f\|_{L^2{(\R^3)}}\leq M$ and $g(t)=e^{-\gamma t}$ for constant $M, \gamma>0$. Assume that $U$ is the solution of the scattering problem (\ref{eq1})–(\ref{eq2}). Then there exist constant $\alpha>0$ and $C>0$ depending on $R$, $\gamma$, $\alpha$ such that 
\begin{equation}\label{f}
 \|f\|^2_{L^2{(\R^3)}}\leq C\Big(\epsilon^2+\frac{ M^2}{(T^{\frac{2}{3}}|\ln \epsilon|^{\frac{1}{4}})^{\alpha}}\Big),
\end{equation}
where $T>1$ and 
\[\epsilon=\Big(\int_0^T\int_{\partial B_R}\big(|\partial^2_tU(x,t)|^2+|\partial_\nu\partial_tU(x,t)|^2\big)ds(x)dt\Big)^{\frac{1}{2}}.\]
\end{theorem}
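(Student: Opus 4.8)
The plan is to pass to the frequency domain, reduce to a one-parameter family of Helmholtz source problems, use the finite-time data to control $\widehat f$ on a low-frequency band whose size is governed by $T$, enlarge that band logarithmically in $\epsilon$ by analytic continuation, bound the higher frequencies by the a priori bound, and add the pieces. For the reduction: extend $U$ by $0$ to $t<0$; since $g(t)=e^{-\gamma t}$ and $\mathrm{supp}\,f\subset B_R$, (\ref{U}) gives $U(x,t)=e^{-\gamma t}\Phi(x)$ on $\partial B_R$ for $t\ge 2R$, with $\Phi(x)=\frac1{4\pi}\int_{B_R}\frac{e^{\gamma|x-y|}}{|x-y|}f(y)\,dy$, so $U$ and its first two derivatives decay like $e^{-\gamma t}$ on $\partial B_R$ with constant $\lesssim_{R,\gamma}\|f\|_{L^2}$, and $\widehat U(x,\omega):=\int_0^\infty U(x,t)e^{i\omega t}dt$ is analytic for $\mathrm{Im}\,\omega>-\gamma$. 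By (\ref{eq1})--(\ref{eq2}) it satisfies $\Delta\widehat U+\omega^2\widehat U=-(\gamma-i\omega)^{-1}f$ with the Sommerfeld condition, hence $\widehat U(x,\omega)=(\gamma-i\omega)^{-1}v(x,\omega)$ with $v(x,\omega)=\frac1{4\pi}\int_{B_R}\frac{e^{i\omega|x-y|}}{|x-y|}f(y)\,dy$, the radiating solution of $\Delta v+\omega^2v=-f$. The facts I would use are that $\omega\mapsto v(x,\omega)$ is entire with $|v(x,\omega)|\lesssim_R\|f\|_{L^2}e^{2R|\mathrm{Im}\,\omega|}$, that its far field is $v_\infty(\widehat x,\omega)=\frac1{4\pi}\widehat f(\omega\widehat x)$, and that, by Plancherel in polar coordinates,
\[\|f\|_{L^2(\mathbb R^3)}^2=c\int_0^\infty\omega^2\int_{S^2}\bigl|\widehat f(\omega\widehat x)\bigr|^2\,ds(\widehat x)\,d\omega .\]

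Next I would control the low-frequency part of this integral by the measured data. Integration by parts in $t$ together with $U(x,0)=\partial_tU(x,0)=0$ gives $\widehat{\partial_t^2U}=-\omega^2\widehat U$ and $\widehat{\partial_\nu\partial_tU}=-i\omega\,\partial_\nu\widehat U$, so $\omega^2v$ and $\omega\,\partial_\nu v$ are recovered on $\partial B_R$ from $\widehat{\partial_t^2U}$ and $\widehat{\partial_\nu\partial_tU}$; replacing these by the transforms over $(0,T)$ costs only $O_{R,\gamma}(Me^{-\gamma T})$ (by the decay above), while Plancherel over $(0,T)$ bounds the $L^2_\omega(\mathbb R)$-norms of the truncated transforms by $\epsilon$. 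Combining this with $\int_{S^2}|\widehat f(\omega\widehat x)|^2 ds\lesssim\int_{\partial B_R}(\omega^2|v|^2+|\partial_\nu v|^2)\,ds$ (Green's representation in $\mathbb R^3\setminus\overline{B_R}$) and a Carleman-type unique-continuation estimate that propagates this boundary control of $v$ into $B_R$, one gets, for $K$ up to a threshold of order $T^{2/3}$,
\[c\int_0^K\omega^2\int_{S^2}\bigl|\widehat f(\omega\widehat x)\bigr|^2\,ds(\widehat x)\,d\omega\ \lesssim_{R,\gamma}\ \epsilon^2+M^2e^{-2\gamma T},\]
the threshold $T^{2/3}$ being exactly the point at which the exponential Carleman weight hidden in the estimate ceases to be absorbed by the decay $e^{-\gamma T}$; this is the ``Lipschitz data discrepancy'' part.

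To go above $T^{2/3}$ I would use analytic continuation in $\omega$: the entire function $\omega\mapsto v(x,\omega)$, controlled on $(0,\,c_1T^{2/3})$ up to $\epsilon$ and growing at most like $e^{2R|\mathrm{Im}\,\omega|}$, is controlled by a two-constants (harmonic-measure) estimate on the longer interval $(0,K_\ast)$, $K_\ast:=c_2\,T^{2/3}|\ln\epsilon|^{1/4}$ --- the exponent $\tfrac14$ coming from the conformal map used to transport the bound --- at the price of replacing $\epsilon$ there by $\epsilon^{\mu}M^{1-\mu}$ with $\mu>0$. For $\omega>K_\ast$ only the a priori bound is available; since $\int_0^\infty\int_{S^2}|\widehat f(\omega\widehat x)|^2ds\,d\omega\lesssim_R\|f\|_{\dot H^{-1}}^2\lesssim_R M^2$ and $f$ has compact support, the weighted tail decays, $c\int_{K_\ast}^\infty\omega^2\int_{S^2}|\widehat f(\omega\widehat x)|^2ds\,d\omega\lesssim M^2K_\ast^{-\delta}$ for a fixed $\delta>0$. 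Adding the three contributions and recalling $K_\ast\simeq T^{2/3}|\ln\epsilon|^{1/4}$ produces (\ref{f}) with $\alpha$ proportional to $\delta$.

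I expect the technical heart to be the two quantitative estimates just invoked: the unique-continuation/Carleman estimate for the wave (equivalently Helmholtz) operator with \emph{explicit} dependence on the frequency and on the time horizon $T$ --- this is what pins the base bandwidth at $\sim T^{2/3}$ --- and the analytic-continuation estimate with the sharp exponent producing the factor $|\ln\epsilon|^{1/4}$. Carrying the constants through the final optimization, treating the low-frequency regime $\omega\approx0$ (where the division by $\omega^2$ above is delicate and one should fall back on the $\partial_\nu\partial_tU$ term), and justifying the tail bound $M^2K_\ast^{-\delta}$ under only the hypothesis $\|f\|_{L^2}\le M$ are the remaining points requiring care.
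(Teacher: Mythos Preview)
Your frequency-domain route is genuinely different from the paper's, and as written it contains a real gap.

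The paper never passes to Helmholtz equations and never does analytic continuation in the frequency $\omega$. It stays entirely in the time domain: setting $I(k)=\int_0^k\int_{\partial B_R}(|\partial_t^2U|^2+|\partial_\nu\partial_tU|^2)\,ds\,dt$, it observes that $k\mapsto I(k)$ extends to an entire function (because $g(t)=e^{-\gamma t}$ is entire), bounded on the sector $|\arg k|<\pi/4$ by $C\|f\|_{L^2}^2|k|e^{2\gamma|k_1|}$. The two-constants lemma is then applied to $J(k)=I(k)e^{-(2\gamma+1)k}$ with $L=T$, giving $|I(k)|\le CM^2e^{(2\gamma+1)k}\epsilon^{2\mu(k)}$ with $\mu(k)\gtrsim(T/k)^2$. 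The optimal cutoff $s_0\sim T^{2/3}|\ln\epsilon|^{1/4}$ is obtained by balancing $e^{(2\gamma+1)s_0}$ against $\epsilon^{2(T/s_0)^2}$; there is no Carleman estimate and no independent ``base bandwidth $T^{2/3}$'' --- that exponent falls out of this single optimization. The tail $\int_{s_0}^\infty(\cdots)\,dt$ is controlled because $|\partial_t^2U|,|\partial_\nu\partial_tU|\lesssim\|f\|_{L^2}e^{-\gamma t}$ on $\partial B_R$, and $e^{-2\gamma t}\le C_\alpha t^{-\alpha-1}$ for every $\alpha>0$.

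Your proposal fails at the step you yourself flag: the high-frequency tail bound $\int_{K_\ast}^\infty\omega^2\int_{S^2}|\widehat f(\omega\widehat x)|^2\,ds\,d\omega\lesssim M^2K_\ast^{-\delta}$ is simply false under the sole hypothesis $\|f\|_{L^2}\le M$, $\mathrm{supp}\,f\subset B_R$. That integral equals $\int_{|\xi|>K_\ast}|\widehat f(\xi)|^2\,d\xi$, and there is no uniform rate of decay for it over the unit ball of $L^2(B_R)$ (take $f_n(x)=\chi(x)e^{in\cdot x}$). Compact support makes $\widehat f$ entire of exponential type but gives no quantitative high-frequency smallness. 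This is not a technicality: it is precisely why the paper stays in the time domain, where the exponential decay of $g$ supplies the polynomial tail for free.

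A second, softer problem is your invocation of a ``Carleman-type unique-continuation estimate'' producing a frequency threshold $\sim T^{2/3}$. The Helmholtz problem at a fixed $\omega$ does not see $T$ at all; $T$ enters only through the time-truncation error $O(Me^{-\gamma T})$ you already accounted for. No mechanism in your sketch explains why control should extend to $K\sim T^{2/3}$ before analytic continuation, and none is needed in the paper's argument. If you want to salvage the frequency-domain picture, you would have to transport the time-domain exponential tail through Parseval rather than try to bound the spectral tail of $f$ directly --- but at that point you are essentially re-deriving the paper's time-domain estimate.
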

\begin{remark}
There are two parts in the stability estimates $(\ref{f})$: the first part is the data
discrepancy and the second part comes from the high time tail of the function. It is clear to
see that the stability increases as the time $T$ increases, i.e., the source $f(x)$ is more stable as more
time domain data are used. Our stability result in this work is consistent with the one in \cite{CIL2016} for both the two-
and three-dimensional inverse scattering problems.
\end{remark}

\subsection{Known results and Motivations}
Inverse sourse problems have many significant applications in scientific and engineering areas.
For instance, detection of submarines and non-destructive measurement of industrial objects
can be regarded as recovery of acoustic sources from boundary measurements of the pressure.
Other application include biomedical imaging optical tomography \cite{[3],Isa1990} and geophysics. For
a mathematical overview of various inverse source problems, one can see that uniqueness and
stability are discussed in \cite{Isa1990}. For inverse source problems in time domain, it is solved as
hyperbolic systems by using Carleman estimate \cite{K2002} and unique continuation; we refer to \cite{CY2006,JLY2017,Y1995,Y1999} for an incomplete list. For time-harmonic inverse source problems, it is well-known that
there is no uniqueness for the inverse source problem with a single frequency due to the existence
of non-radiating sources \cite{BC,HKP2005}. Therefore, the use of multiple frequencies data is an effective
way to overcome non-uniqueness and has received a lot of attention in recent years. \cite{[12]} showed
the uniqueness and numerical results for Helmholtz equation with multi-frequency data. In \cite{[5]},
Bao et al. initiated the mathematical study on the stability of the inverse source problem for the
Helmholtz equation by using multi-frequency data. Then in \cite{CIL2016}, a different method involving a
temporal Fourier transform, sharp bounds of the analytic continuation to higher wave numbers
was used to derive increasing stability bounds for the three dimensional Helmholtz equation.
Also, \cite{CIL2016} firstly combined the Helmholtz equation and associated hyperbolic equations to get
the stability results. Later in \cite{[20]} and \cite{BLZ2020}, increasing stability was extended to Helmholtz
equation and Maxwell’s equation in three dimension. We refer to \cite{ACTV2013, ZL2019, AHS2024, S2024} for the study of the
inverse source problems by using multiple frequency information.

Although a lot of work has been done on increasing stability of the inverse source problem for the time-harmonic waves. However, acoustic wave equation can also describe the propagation of waves. Does the inverse source problem of the acoustic equation also have some increasing stability?  This work
initializes the mathematical study and provides the first stability results of the inverse source problems
for acoustic waves. The next section  is devoted
to the proof of the result.

\section{Proof of Theorem \ref{1}}\label{sec2}

The following lemma gives bounds for $f$ within the framework of the hyperbolic initial
value problem (\ref{eq1}). 
\begin{theorem}\label{BK}
Let $u$ be the solution of the scattering problem (\ref{eq1})–(\ref{eq2}). Then 
\begin{equation}\nonumber
\|f\|^2_{L^2(\R^3)}\leq C\Big(\|\partial^2_tu\|^2_{L^2(\partial B_R\times(0,\infty))}+\|\partial_\nu\partial_tu\|^2_{L^2(\partial B_R\times(0,\infty))}\Big),
\end{equation}
where $C>0$ depends on $R$ and $\gamma$.
\end{theorem}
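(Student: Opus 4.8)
The plan is to turn the inverse source problem into the classical problem of recovering the initial velocity of a free wave, where a single energy identity does the work. Because $g(t)=e^{-\gamma t}$ satisfies $g'+\gamma g\equiv 0$ and $g(0)=1$, the function $W:=\partial_t u+\gamma u$ solves the homogeneous wave equation: $\partial_t^2W-\Delta W=\partial_t(fg)+\gamma fg=f(g'+\gamma g)=0$ in $\R^3\times(0,\infty)$, with $W(\cdot,0)=0$ and $\partial_tW(\cdot,0)=\partial_t^2u(\cdot,0)+\gamma\partial_tu(\cdot,0)=\Delta u(\cdot,0)+f\,g(0)=f$. Hence $W$ is exactly the Kirchhoff solution with vanishing initial position and initial velocity $f$, the latter supported in $B_R$.

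The core estimate is then the interior energy identity for $W$. Multiplying $\partial_t^2W=\Delta W$ by $\partial_tW$ and integrating over $B_R$ gives $\frac{d}{dt}\,\frac12\int_{B_R}\big(|\partial_tW|^2+|\nabla W|^2\big)\,dx=\int_{\partial B_R}\partial_\nu W\,\partial_tW\,ds$. At $t=0$ the interior energy equals $\frac12\|f\|_{L^2(\R^3)}^2$ since $W(\cdot,0)=0$ and $f$ is supported in $B_R$; by the strong Huygens principle in $\R^3$ the solution $W(\cdot,t)$ vanishes on $\overline{B_R}$ for every $t>2R$, so the interior energy vanishes as $t\to\infty$ and the interior boundary traces vanish for $t>2R$ as well. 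Integrating the identity in time and using $2ab\le a^2+b^2$,
\[
\|f\|_{L^2(\R^3)}^2=-2\int_0^\infty\!\!\int_{\partial B_R}\partial_\nu W\,\partial_tW\,ds\,dt\le\|\partial_tW\|_{L^2(\partial B_R\times(0,\infty))}^2+\|\partial_\nu W\|_{L^2(\partial B_R\times(0,\infty))}^2 .
\]

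It remains to re-express the right-hand side through the measured quantities, i.e.\ to prove $\|\partial_t u\|_{L^2(\partial B_R\times(0,\infty))}\lesssim_{R,\gamma}\|\partial_t^2u\|_{L^2(\partial B_R\times(0,\infty))}$ and $\|\partial_\nu u\|_{L^2(\partial B_R\times(0,\infty))}\lesssim_{R,\gamma}\|\partial_\nu\partial_tu\|_{L^2(\partial B_R\times(0,\infty))}$, after which $\partial_tW=\partial_t^2u+\gamma\partial_tu$ and $\partial_\nu W=\partial_\nu\partial_tu+\gamma\partial_\nu u$ are controlled by the data and the theorem follows with $C=C(R,\gamma)$. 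For $0<t<2R$ I would use that $\partial_tu(x,0)=0$ and $\partial_\nu u(x,0)=0$ on $\partial B_R$, so $\partial_tu(x,t)=\int_0^t\partial_t^2u(x,s)\,ds$ and $\partial_\nu u(x,t)=\int_0^t\partial_\nu\partial_tu(x,s)\,ds$, and Cauchy–Schwarz bounds these on the bounded interval; for $t>2R$ the retarded integral in (\ref{U}) runs over the whole support of $f$, so $u(x,t)=e^{-\gamma t}\Phi(x)$ on $\partial B_R$ with $\Phi(x)=\frac1{4\pi}\int_{B_R}\frac{e^{\gamma|x-y|}}{|x-y|}f(y)\,dy$, whence $\partial_t^2u=-\gamma\partial_tu$ and $\partial_\nu\partial_tu=-\gamma\partial_\nu u$ there. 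Adding the two regimes yields the two comparisons.

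The step I expect to need the most care is the analysis on the infinite time interval inside the energy identity: verifying via strong Huygens that $W(\cdot,t)$ and its boundary traces vanish for $t>2R$ (so that the would-be divergent tail is actually absent and the time integral reduces to $(0,2R)$), and justifying the differentiated energy balance for a solution whose boundary traces are only $L^2$ — which is where sharp (hidden) trace regularity for finite-energy wave solutions is used, and where one also remarks that if the right-hand side of the theorem is infinite there is nothing to prove. The algebraic reduction $W=\partial_tu+\gamma u$, which is where the exponential profile $g=e^{-\gamma t}$ is essential, and the two comparison estimates are otherwise routine.
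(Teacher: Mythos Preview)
Your argument is correct, and it follows a genuinely different route from the paper's own proof. The paper works in the frequency domain: it takes the temporal Fourier transform $u(x,w)=\int_0^\infty U(x,t)e^{-iwt}\,dt$, multiplies the wave equation by $e^{-i(\xi\cdot x+wt)}$ and integrates by parts over $B_R\times(0,\infty)$ to express $\hat f(\xi)\,\hat g(w)$ through the Cauchy data of $u(\cdot,w)$ on $\partial B_R$; dividing by $\hat g(w)=(\gamma+iw)^{-1}$, integrating over $\xi\in\R^3$ in spherical coordinates, and invoking Parseval in time converts the frequency integrals back into $\|\partial_t^2 U\|_{L^2(\partial B_R\times(0,\infty))}^2+\|\partial_\nu\partial_t U\|_{L^2(\partial B_R\times(0,\infty))}^2$. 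Your approach stays entirely in the time domain: the substitution $W=\partial_t u+\gamma u$, which exploits $g'+\gamma g=0$, turns the problem into the classical one of recovering the initial velocity of a \emph{free} wave, and then a single interior energy identity together with the strong Huygens principle in $\R^3$ does all the work. The Fourier/Parseval method is in principle more flexible in the choice of $g$ (any $g$ whose transform has a quantitative lower bound would do), whereas your reduction is tailor-made for the exponential profile; on the other hand your argument is cleaner, makes the constants explicit, and sidesteps the somewhat delicate step in the paper where one must track the decay of $|\hat g(w)|$ against the Jacobian factor $w^2$. The hidden-regularity caveat you flag and the two regime estimates ($t\le 2R$ via Cauchy--Schwarz, $t>2R$ via the factorisation $u(x,t)=e^{-\gamma t}\Phi(x)$ from the retarded potential) are handled correctly.
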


\begin{proof}
Let $\xi\in\R^3$ with $|\xi|=w\in(0,\infty)$.
Our argument relies essentially on the temporal
Fourier transform of the function $U$, deﬁned by
\begin{equation}\nonumber
u(x,w)=\int_{0}^{\infty}U(x,t)e^{-iwt}dt.
\end{equation}
Multiplying $e^{-i(\xi\cdot x+wt)}$ on both sides of (\ref{1})
and integrating over $B_R\times(0,\infty)$, we obtain
\begin{equation}\nonumber
\begin{split}
\int_{B_R}f(x)e^{-i\xi\cdot x}dx\int_{0}^{\infty}g(t)e^{-iwt}dt
&=-\int_0^\infty\int_{\partial B_R}e^{-i\xi\cdot x}(\partial_\nu U(x,t)e^{-iwt}+i(\nu\cdot\xi)U(x,w)e^{-iwt})ds(x)dt\\
&=-\int_{\partial B_R}e^{-i\xi\cdot x}(\partial_\nu u(x,w)+i(\nu\cdot\xi)u(x,k))ds(x).
\end{split}
\end{equation}
Here we used that the temporal function $g(t)=e^{-\gamma t}$ is time decay.

Since $|\int_{0}^{\infty}g(t)e^{-iwt}dt|\geq c_0>0$ and Supp$f\subset B_R$, we have
\begin{equation}\nonumber
|\int_{\R^3}f(x)e^{-i\xi\cdot x}dx|^2\leq C\int_{\partial B_R}(\big|\partial_\nu u(x,w)|^2+w^2|u(x,w)\big|^2)ds(x).
\end{equation}
Hence we obtain by using the spherical coordinates that
\begin{equation}
\begin{split}
\int_{\R^3}|\int_{\R^3}f(x)e^{-i\xi\cdot x}dx|^2d\xi&\leq C \int_{\R^3}\int_{\partial B_R}\big|\partial_\nu u(x,w)|^2+w^2|u(x,w))\big|^2ds(x)\\
&\leq C\int_0^{2\pi}d\theta\int_0^{\pi}\sin\varphi d\varphi\int_0^{\infty}w^2\big(\int_{\partial B_R}|\partial_\nu u(x,w)|^2+w^2|u(x,w))|^2ds(x)\big)dw\\
&\leq C\int_0^{\infty}(\int_{\partial B_R}|w\partial_\nu u(x,w)|^2+|w^2u(x,w))|^2)ds(x)dw.
\end{split}
\end{equation}
Using the Parseval’s identity yields
\begin{equation}\nonumber
\int_0^\infty\int_{\partial B_R}(|\partial^2_tu|^2ds(x)dt+|\partial_\nu\partial_tu|^2)ds(x)dt=\int_0^{\infty}\int_{\partial B_R}(|w^2u(x,w)|^2+|w\partial_\nu u(x,w)|^2)ds(x)dw.
\end{equation}
It follows from the Parseval’s identity again  that
\begin{equation}
\|f\|^2_{L^2(\R^3)}\leq C\Big(\|\partial^2_tu\|^2_{L^2(\partial B_R\times(0,\infty))}+\|\partial_\nu\partial_tu\|^2_{L^2(\partial B_R\times(0,\infty))}\Big),
\end{equation}
which completes the proof.

\end{proof}
Let
\begin{equation}\nonumber
I_1(k)=\int_0^{k}\int_{\partial B_R}|\partial^2_tu(x,t)|^2ds(x)dt
\end{equation}
and
\begin{equation}\nonumber
I_2(k)=\int_0^k\int_{\partial B_R}|\partial_\nu\partial_tu(x,t)|^2ds(x)dt.
\end{equation}
Denote
\[S=\{z=x+iy\in\C: -\frac{\pi}{4}<z<\frac{\pi}{4}\}.\]
Since $g(t)$ is entire analytic function of $t$, the integrals $I_1(k)$ and $I_2(k)$ with respect
to $t$ can be taken over any path joining points $0$ and $k$ of the complex plane. Thus $I_1(k)$ and $I_2(k)$ are entire analytic functions of $k=k_1+ik_2\in\C$ ($k_1,k_2\in\R$) and the following elementary estimates hold.

\begin{lemma}
Let $f\in L^2(\R^3)$, $supp f\subset B_R$. For any $k=k_1+ik_2\in S$, we have
\begin{equation}\label{eIk}
\begin{split}
|I_1(k)|\leq C\|f\|_{ L^2(\R^3)}^2|k|e^{2\gamma|k_1|}
\end{split}
\end{equation}
and 
\begin{equation}\label{eIk2}
\begin{split}
|I_1(k)|\leq C\|f\|_{ L^2(\R^3)}^2|k|e^{2\gamma|k_1|},
\end{split}
\end{equation}
where $C>0$ depends on $R$ and $\gamma$.
\end{lemma}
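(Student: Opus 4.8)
The plan is to use the special form $g(t)=e^{-\gamma t}$, which makes the retarded representation (\ref{U}) separate the time and space variables completely. First I would note that, for $x\in\partial B_R$ and $t\in\C$,
\[
U(x,t)=\frac{1}{4\pi}\int_{B_R}\frac{f(y)\,e^{-\gamma(t-|x-y|)}}{|x-y|}\,dy=e^{-\gamma t}\,\phi(x),\qquad
\phi(x):=\frac{1}{4\pi}\int_{B_R}\frac{e^{\gamma|x-y|}}{|x-y|}\,f(y)\,dy,
\]
where $\phi$ is independent of $t$; in particular $U(x,\cdot)$ is entire, which is precisely the fact invoked just before the statement in order to take the $t$–integration over the segment $[0,k]$. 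Differentiating in $t$ gives, on $\partial B_R$,
\[
\partial_t^2U(x,t)=\gamma^2e^{-\gamma t}\phi(x),\qquad \partial_\nu\partial_tU(x,t)=-\gamma e^{-\gamma t}\,\partial_\nu\phi(x).
\]

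The second step is the auxiliary bound $\|\phi\|_{L^2(\partial B_R)}+\|\partial_\nu\phi\|_{L^2(\partial B_R)}\le C(R,\gamma)\,\|f\|_{L^2(\R^3)}$. For the term without a derivative this is elementary: since $|x-y|<2R$ on the integrand, $|\phi(x)|\le\frac{e^{2\gamma R}}{4\pi}\int_{B_R}|x-y|^{-1}|f(y)|\,dy$, and Cauchy–Schwarz together with $\int_{B_R}|x-y|^{-2}\,dy\le\int_{|z|<2R}|z|^{-2}\,dz=8\pi R$ gives $\|\phi\|_{L^\infty(\partial B_R)}\le C(R,\gamma)\|f\|_{L^2(\R^3)}$, hence the $L^2(\partial B_R)$ bound. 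For $\partial_\nu\phi$ the same crude estimate fails — differentiating the kernel produces a factor $|x-y|^{-2}$, and $\int_{B_R}|x-y|^{-4}\,dy=+\infty$ in $\R^3$, so the pointwise Cauchy–Schwarz argument no longer applies — and this is the one point of the lemma that is not a one–line computation. I would get around it by noting that $\phi$ solves the Yukawa-type equation $(-\Delta+\gamma^2)\phi=f$ in $\R^3$ with $f\in L^2(B_R)$: by interior elliptic regularity $\phi$ lies in $H^2$ of a neighbourhood $V$ of $\partial B_R$ with $\|\phi\|_{H^2(V)}\le C\big(\|f\|_{L^2(\R^3)}+\|\phi\|_{L^2(V)}\big)\le C(R,\gamma)\|f\|_{L^2(\R^3)}$ (the last step by the pointwise bound just established), and the trace theorem then yields $\|\partial_\nu\phi\|_{L^2(\partial B_R)}\le C\|\phi\|_{H^2(V)}\le C(R,\gamma)\|f\|_{L^2(\R^3)}$.

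Finally, the separated form of $U$ reduces $I_1$ and $I_2$ to one–dimensional $t$–integrals along $[0,k]$:
\[
I_1(k)=\gamma^4\Big(\int_0^k e^{-2\gamma t}\,dt\Big)\int_{\partial B_R}\phi(x)^2\,ds(x),\qquad
I_2(k)=\gamma^2\Big(\int_0^k e^{-2\gamma t}\,dt\Big)\int_{\partial B_R}(\partial_\nu\phi(x))^2\,ds(x).
\]
Parametrising $t=\tau k$, $\tau\in[0,1]$, one has, for every $k=k_1+ik_2\in S$,
\[
\Big|\int_0^k e^{-2\gamma t}\,dt\Big|\le|k|\sup_{0\le\tau\le1}e^{-2\gamma\tau k_1}\le|k|\,e^{2\gamma|k_1|},
\]
whence $|I_1(k)|\le\gamma^4|k|e^{2\gamma|k_1|}\|\phi\|^2_{L^2(\partial B_R)}\le C\|f\|^2_{L^2(\R^3)}|k|e^{2\gamma|k_1|}$, which is (\ref{eIk}), and similarly $|I_2(k)|\le C\|f\|^2_{L^2(\R^3)}|k|e^{2\gamma|k_1|}$ — this is what (\ref{eIk2}) should read, the printed inequality there evidently being a typo for $I_2$. (The exact value $\int_0^k e^{-2\gamma t}\,dt=\frac{1-e^{-2\gamma k}}{2\gamma}$ in fact gives $|I_1(k)|,|I_2(k)|\le C\|f\|^2_{L^2(\R^3)}e^{2\gamma|k_1|}$ without the factor $|k|$, but the weaker form above is all that is used afterwards.) The only genuinely non-routine ingredient is the $L^2(\partial B_R)$ bound on $\partial_\nu\phi$, which forces one to invoke elliptic regularity for the equation satisfied by $\phi$ rather than to estimate the volume potential and its gradient by brute force.
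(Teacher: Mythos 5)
Your proof is correct (within the same formal conventions the paper itself uses: the representation (\ref{U}) with $g$ taken as $e^{-\gamma t}$ for all arguments, and the analytic continuation of the $t$-integrals to $k\in S$), and you correctly identify the second display of the lemma as a typo for $I_2$; but your route differs from the paper's. The paper does not exploit the separation $U(x,t)=e^{-\gamma t}\phi(x)$: it keeps the retarded form $g''(t-|x-y|)$, substitutes $t=k\hat t$ with $\hat t\in(0,1)$ to extract the factor $|k|$, and then estimates the volume-potential kernels directly -- Cauchy--Schwarz for the $|x-y|^{-1}$ kernel in $I_1$, and for $I_2$ a pointwise bound on $\partial_\nu\partial_t u$ obtained by passing to polar coordinates $\rho=|y-x|$ so that the Jacobian $\rho^2$ absorbs the $|x-y|^{-2}$ singularity. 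That second step is exactly the point you flagged: as written, the paper's pointwise estimate $\big|\int_0^{2R}f e^{\gamma\rho}d\rho\big|^2\le C\|f\|^2_{L^2(B_R)}$ is not literally valid (a pointwise bound of $\int_{B_R}|f(y)||x-y|^{-2}dy$ by $\|f\|_{L^2}$ fails, as concentrating $f$ near $x$ shows); what is true, and what the lemma needs, is the surface-integrated bound $\|\partial_\nu\phi\|_{L^2(\partial B_R)}\lesssim\|f\|_{L^2}$, which your argument supplies rigorously via $(-\Delta+\gamma^2)\phi=f$, interior $H^2$ regularity near $\partial B_R$, and the trace theorem. So your proof is in this respect more careful than the paper's; what the paper's kernel-based route buys in exchange is that it does not rely on the exact separability of $g$, so it is in principle adaptable to more general analytic, decaying pulses, whereas your reduction to a single stationary Yukawa problem is tied to $g(t)=e^{-\gamma t}$ (for which it is shorter and, as you note, even gives the bound without the factor $|k|$).
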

\begin{proof}
Since
\begin{equation}\nonumber
U(x,t)=\frac{1}{4\pi}\int_{\R^3}\frac{f(y)g(t-|x-y|)}{|y-x|}dy,
\end{equation}
we get
\begin{equation}\nonumber
\partial^2_tU(x,t)=\frac{1}{4\pi}\int_{\R^3}\frac{f(y)g''(t-|x-y|)}{|y-x|}dy.
\end{equation}
Let $t=k\hat{t}$, $\hat{t}\in(0,1)$. It follows from the change of
variables that
\begin{equation}\nonumber
\begin{split}
I_1(k)=&\int_0^{k}\int_{\partial B_R}|\partial^2_tu(x,t)|^2ds(x)dt\\
&=\int_0^{k}\int_{\partial B_R}\big|\frac{1}{4\pi}\int_{B_R}\frac{f(y)g''(t-|x-y|)}{|y-x|}dy\big|^2ds(x)dt\\
&=\int_0^{1}\int_{\partial B_R}k\big|\frac{1}{4\pi}\int_{B_R}\frac{f(y)g''(k\hat{t}-|x-y|)}{|y-x|}dy\big|^2ds(x)d\hat{t}.
\end{split}
\end{equation}
Using the Cauchy–Schwarz inequality yields
\begin{equation}\nonumber
\begin{split}
|I_1(k)|&\leq C\int_0^{1}|k|\int_{\partial B_R}|f(y)|^2dy\int_{\partial B_R} \big|\frac{1}{4\pi}\int_{B_R}\frac{g''(k\hat{t}-|x-y|)}{|y-x|}dy\big|^2ds(x)d\hat{t}\\
&\leq C\int_0^{1}|k|\int_{\partial B_R}|f(y)|^2dy\int_{\partial B_R} |\frac{1}{4\pi}\int_{B_R}\frac{e^{-2\gamma(k\hat{t}-|x-y|)}}{|y-x|^2}dy|ds(x)d\hat{t}\\
&\leq C|k|e^{\gamma k_1}\|f\|^2_{L^2{(B_R)}}.
\end{split}
\end{equation}
Noting that 
\begin{equation}\nonumber
\partial_\nu\partial_tu(x,t)=\frac{1}{4\pi}\int_{\R^3}\frac{f(y)\big(g''(t-|x-y|)+g'(t-|x-y|)\big)\frac{y-x}{|y-x|}\cdot\nu}{|y-x|^2}dy,
\end{equation}
we have from the Cauchy–Schwarz inequality and the polar coordinates $\rho=|y-x|$ (originated at $x$) with respect to $y$ yield that
\begin{equation}\nonumber
|\partial_\nu\partial_tu(x,t)|^2\leq e^{-2\gamma t}|\int_0^{2R}f(y)e^{\gamma\rho}d\rho|^2\leq C\|f\|^2_{L^2{(B_R)}}e^{-2\gamma t}.
\end{equation}
Similarly we obtain 
\begin{equation}\label{IM}
\begin{split}
|I_2(k)|=|k\int_0^1\int_{\Gamma_0}|\partial_\nu\partial_tu(x,k\hat{t})|^2ds(x)d\hat{t}|
\leq C\|f\|^2_{L^2{(B_R)}}|k|e^{2\gamma|k_1|},
\end{split}
\end{equation}
which completes the proof.
\end{proof}

\begin{lemma}
For any $\alpha>0$, we have
\begin{equation}\nonumber
\begin{split}
\int_k^{\infty}\int_{\partial B_R}|\partial^2_tu(x,t)|^2ds(x)dt+\int_k^{\infty}\int_{\partial B_R}|\partial_\nu\partial_tu(x,t)|^2ds(x)dt\leq C\frac{\|f\|^2_{L^2(\R^3)}}{k^\alpha},
\end{split}
\end{equation}
where $C>0$ depends on $R$, $\gamma$ and $\alpha$.
\end{lemma}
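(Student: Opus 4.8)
The plan is to reduce this tail estimate to the pointwise exponential-decay bounds for $\partial_t^2 U$ and $\partial_\nu\partial_t U$ on $\partial B_R$ that were already extracted in the proof of the preceding lemma, and then to absorb the resulting exponential into an arbitrary negative power of $k$ via an elementary one-variable inequality.

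First I would record the pointwise bounds. From the representation formula (\ref{U}) one has $\partial_t^2 U(x,t)=\frac{1}{4\pi}\int_{B_R}\frac{f(y)g''(t-|x-y|)}{|y-x|}\,dy$, and since $g(t)=e^{-\gamma t}$ we have $g''(s)=\gamma^2 e^{-\gamma s}$; hence for $x\in\partial B_R$ and $y\in B_R$, where $|x-y|<2R$, we get $|g''(t-|x-y|)|=\gamma^2 e^{-\gamma t}e^{\gamma|x-y|}\le \gamma^2 e^{2\gamma R}e^{-\gamma t}$. Applying the Cauchy--Schwarz inequality and passing to polar coordinates $\rho=|y-x|$ centred at $x$ --- which converts $\int_{B_R}|y-x|^{-2}\,dy$ into an integral over a bounded set --- yields
\[
|\partial_t^2 U(x,t)|^2\le C\,\|f\|^2_{L^2(B_R)}\,e^{-2\gamma t},\qquad x\in\partial B_R,\ t>0,
\]
with $C=C(R,\gamma)$. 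The same computation applied to the formula for $\partial_\nu\partial_t U$ recorded above gives the analogous bound $|\partial_\nu\partial_t U(x,t)|^2\le C\,\|f\|^2_{L^2(B_R)}\,e^{-2\gamma t}$; these are exactly the estimates used in the proof of the preceding lemma with $k\hat t$ replaced by the real variable $t$.

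Next, I would integrate these two pointwise inequalities over the surface $\partial B_R$, which has finite area, and then over $t\in(k,\infty)$, to obtain
\[
\int_k^{\infty}\int_{\partial B_R}\big(|\partial_t^2 U(x,t)|^2+|\partial_\nu\partial_t U(x,t)|^2\big)\,ds(x)\,dt\le C\,\|f\|^2_{L^2(\R^3)}\int_k^{\infty}e^{-2\gamma t}\,dt=\frac{C}{2\gamma}\,\|f\|^2_{L^2(\R^3)}\,e^{-2\gamma k}.
\]
Finally, I would invoke the elementary fact that for every $\alpha>0$ the function $k\mapsto k^{\alpha}e^{-2\gamma k}$ is bounded on $(0,\infty)$, with maximum $\big(\frac{\alpha}{2\gamma e}\big)^{\alpha}$ attained at $k=\alpha/(2\gamma)$, so that $e^{-2\gamma k}\le C_{\alpha,\gamma}\,k^{-\alpha}$ for all $k>0$. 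Substituting this bound into the previous display gives the claim with a constant depending only on $R$, $\gamma$, and $\alpha$.

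As for the main obstacle: there is none of substance. The only point requiring mild care is the handling of the $|y-x|^{-1}$ (and, for the normal derivative, $|y-x|^{-2}$) singularities of the volume integrals, which is dispatched by switching to polar coordinates centred at $x$ and using the compact support of $f$; everything else reduces to the exponential decay of $g$ together with the one-line calculus inequality $\sup_{k>0}k^{\alpha}e^{-2\gamma k}<\infty$.
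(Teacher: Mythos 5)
Your proposal is correct and takes essentially the same route as the paper: both arguments reduce the tail to the pointwise exponential bound $|\partial_t^2U(x,t)|^2+|\partial_\nu\partial_tU(x,t)|^2\leq C\|f\|_{L^2}^2e^{-2\gamma t}$ on $\partial B_R$ obtained from the representation formula via Cauchy--Schwarz, and then trade the exponential for an arbitrary power of $k$. The only (immaterial) difference is the order of operations: the paper inserts $e^{-2\gamma t}\leq Ct^{-\alpha-1}$ under the time integral and then integrates over $(k,\infty)$, whereas you integrate the exponential first and then use $e^{-2\gamma k}\leq C_{\alpha,\gamma}k^{-\alpha}$.
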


\begin{proof}
Using $e^{-2\gamma t}\leq C\frac{1}{t^{\alpha+1}}$, $t>0$, where $C>0$ depends on $\gamma$ and $\alpha$,
we have
\begin{equation}\nonumber
\begin{split}
\int_k^{\infty}\int_{\partial B_R}|\partial^2_tu(x,t)|^2ds(x)dt
&=\int_k^{\infty}\int_{\partial B_R}|\frac{1}{4\pi}\int_{B_R}\frac{f(y)g''(t-|x-y|)}{|y-x|}dy|^2ds(x)dt\\
&\leq C\frac{\|f\|^2_{L^2(\R^3)}}{s^\alpha}.
\end{split}
\end{equation}
Similarly one may show that
$\int_k^{\infty}\int_{\partial B_R}|\partial_\nu\partial_tu(x,t)|^2ds(x)dt$ have similar exponential decay estimates.

\end{proof}

Let us recall the following result, which is proved in \cite{CIL2016}.
\begin{lemma}\label{2}
  Let $J(z)$ be an analytic function in $S=\{z=x+iy\in \mathbb{C}:-\frac{\pi}{4}<\arg z<\frac{\pi}{4}\}$ and continuous in $\overline{S}$ satisfying
  \begin{equation}
  \nonumber
  \begin{cases}
    |J(z)| \leq \epsilon, \  & z\in (0,\ L], \\
    |J(z)| \leq V, \  & z\in S,\\
    |J(0)|  =0.
  \end{cases}
  \end{equation}
Then there exists a function $\mu(z)$ satisfying
  \begin{equation}
  \nonumber
  \begin{cases}
   \mu (z)  \geq\frac{1}{2},\ \ &z\in (L,\ 2^{\frac{1}{4}}L), \\
   \mu (z)  \geq\frac{1}{\pi}((\frac{z}{L})^4-1)^{-\frac{1}{2}},\ \ & z\in (2^{\frac{1}{4}}L, \ +\infty)
  \end{cases}
  \end{equation}
   such that
\begin{equation}
\nonumber
 |J(z)|\leq V\epsilon^{\mu(z)} \quad \mbox{for all} \quad z\in (L, \ +\infty).
\end{equation}
\end{lemma}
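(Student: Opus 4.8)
The plan is to read this as a two–constants (Hadamard three–lines type) estimate and to reduce it to an explicit harmonic–measure computation via conformal mapping. Since $J$ is analytic on $S$ and continuous on $\overline{S}$, the function $\log|J|$ is subharmonic on the slit domain $G:=S\setminus(0,L]$ and bounded above there by $\log V$; on the slit $(0,L]$ it is $\le\log\epsilon$, on the two bounding rays $\arg z=\pm\pi/4$ it is $\le\log V$, and the vertex $z=0$ (where $J(0)=0$) is a single boundary point of zero harmonic measure. Writing $\mu(z)$ for the harmonic measure of the slit $(0,L]$ with respect to $G$, evaluated at $z$, the two–constants theorem gives
\[
\log|J(z)|\le\mu(z)\log\epsilon+(1-\mu(z))\log V,\qquad z\in G,
\]
so that $|J(z)|\le V^{1-\mu(z)}\epsilon^{\mu(z)}\le V\epsilon^{\mu(z)}$ on $(L,\infty)$ (the last step using the standard normalization $V\ge1$). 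It then remains only to bound $\mu$ from below and to check the two displayed inequalities.

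To compute $\mu$ I would flatten $G$ onto a half–plane. The map $z\mapsto z^4$ takes $S=\{|\arg z|<\pi/4\}$ conformally onto $\C\setminus(-\infty,0]$, sending the slit $(0,L]$ to $(0,L^4]$ and collapsing both rays $\arg z=\pm\pi/4$ onto $(-\infty,0)$; hence $G$ is mapped onto $\C\setminus(-\infty,L^4]$. The map $s\mapsto L^4-s^2$ then takes the upper half–plane $\{\Im s>0\}$ onto $\C\setminus(-\infty,L^4]$, carrying the real segment $(-L^2,L^2)$ onto the slit image $(0,L^4]$ and the two real rays $|s|>L^2$ onto $(-\infty,0]$. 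Composing, a real point $z>L$ corresponds to $s=i\sqrt{z^4-L^4}$ in the upper half–plane; since harmonic measure in a half–plane is the normalized angle subtended by the boundary arc, this yields
\[
\mu(z)=\frac{2}{\pi}\arctan\frac{L^2}{\sqrt{z^4-L^4}},\qquad z>L.
\]

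The two bounds are then elementary calculus. If $z\in(L,2^{1/4}L)$ then $z^4-L^4<L^4$, so the argument of $\arctan$ exceeds $1$ and $\mu(z)\ge\frac12$, with equality at $z=2^{1/4}L$. If $z>2^{1/4}L$, set $t:=L^2/\sqrt{z^4-L^4}\in(0,1)$; since $\arctan t-\tfrac t2$ vanishes at $t=0$ and has derivative $\tfrac{1}{1+t^2}-\tfrac12\ge0$ on $[0,1]$, we get $\arctan t\ge\tfrac t2$, hence $\mu(z)\ge\frac{t}{\pi}=\frac1\pi\big((z/L)^4-1\big)^{-1/2}$, which is exactly the claimed lower bound. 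Combining with the two–constants inequality above finishes the proof.

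I expect the only real difficulties to be bookkeeping rather than conceptual: tracking the branches of the two conformal maps so that the slit, the two rays, and the vertex land where claimed, and justifying the two–constants inequality on the unbounded domain $G$ — most cleanly by transporting everything to the upper half–plane and invoking the Poisson/Herglotz representation of the least harmonic majorant of $\log|J|$, whose linear and singular parts have the correct sign and can be discarded. Once the identity for $\mu(z)$ is in hand, the remainder is the one–line estimate $2\arctan t\ge t$ on $[0,1]$.
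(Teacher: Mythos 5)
The paper offers no proof of this lemma---it is simply quoted from \cite{CIL2016}---and your argument (the two-constants theorem on the slit sector $S\setminus(0,L]$, with the harmonic measure of the slit computed explicitly through the conformal maps $z\mapsto z^4$ and $s\mapsto L^4-s^2$ to get $\mu(z)=\tfrac{2}{\pi}\arctan\bigl(L^2/\sqrt{z^4-L^4}\bigr)$, followed by the elementary bound $\arctan t\ge t/2$ on $[0,1]$) is correct and is essentially the standard proof given in that reference. The only point worth making explicit is the passage from the two-constants bound $\epsilon^{\mu(z)}V^{1-\mu(z)}$ to the stated $V\epsilon^{\mu(z)}$, which requires $V\ge 1$ (equivalently, replace $V$ by $\max(V,1)$); you flag this normalization, and it is harmless for the way the lemma is invoked in Section \ref{sec2}.
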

Let $I(k)=I_1(k)+I_2(k)$. Using Lemma $\ref{2}$, we show the relation between $I(k)$ for $s\in(T,\infty)$ with $I(T)$.
\begin{lemma}
Let $||f||_{L^2(\R^3)}\leq M$. Then there exists a function $\mu(s)$ satisfying 
  \begin{equation}
  \nonumber
  \begin{cases}
   \mu (k)  \geq\frac{1}{2},\ \ &k\in (T,\ 2^{\frac{1}{4}}T), \\
   \mu (k)  \geq\frac{1}{\pi}((\frac{s}{T})^4-1)^{-\frac{1}{2}},\ \ & k\in (2^{\frac{1}{4}}T, \ +\infty)
  \end{cases}
  \end{equation}
such that 
\begin{equation}\label{CM}
|I(k)|\leq CM^2e^{(2\gamma+1)k}\epsilon^{2\mu(k)} \quad \mbox{for all}
\ T<k<+\infty,
\end{equation}
where $C>0$ depends on $R$ and $\gamma$.
\end{lemma}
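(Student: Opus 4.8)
The plan is to reduce the statement to a single application of Lemma~\ref{2} (the analytic‑continuation estimate from \cite{CIL2016}) after an exponential rescaling that compensates for the growth of $I(k)$ on the sector $S$. Recall that $I(k)=I_1(k)+I_2(k)$ is entire in $k$ and $I(0)=I_1(0)+I_2(0)=0$. The obstruction to using Lemma~\ref{2} directly is that the sectorial bound of the previous lemma, $|I_j(k)|\le CM^2|k|e^{2\gamma|k_1|}$, is \emph{not} uniformly bounded on $S$; so instead I would work with
\[
J(k):=I(k)\,e^{-(2\gamma+1)k},
\]
which is again entire, satisfies $J(0)=0$, is analytic in $S$ and continuous on $\overline S$, and — crucially — turns out to be uniformly bounded on $S$.

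First I would record the behaviour of $J$ on the real segment $(0,T]$. Since $I_1$ and $I_2$ are integrals over $(0,k)$ of nonnegative integrands, they are nondecreasing for real $k>0$, hence $0\le I(k)\le I(T)=\epsilon^2$ for $k\in(0,T]$, where the last equality is just the definition of $\epsilon$. Combining this with $e^{-(2\gamma+1)k}\le 1$ on $(0,T]$ gives $|J(k)|\le \epsilon^2$ for all $k\in(0,T]$.

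Next I would bound $J$ on $S$. For $z\in\overline S$ one has $\arg z\in[-\tfrac\pi4,\tfrac\pi4]$, so $\Rt z=|z|\cos(\arg z)\ge |z|/\sqrt2\ge 0$ and in particular $|k_1|=\Rt z$. Feeding the estimates \eqref{eIk}, \eqref{IM} of the previous lemma together with $\|f\|_{L^2(\R^3)}\le M$ into the definition of $J$ yields
\[
|J(z)|\le\big(|I_1(z)|+|I_2(z)|\big)e^{-(2\gamma+1)\Rt z}\le CM^2|z|\,e^{2\gamma\Rt z-(2\gamma+1)\Rt z}=CM^2|z|\,e^{-\Rt z}\le CM^2|z|\,e^{-|z|/\sqrt2}.
\]
Since $r\mapsto re^{-r/\sqrt2}$ is bounded on $(0,\infty)$, this gives $|J(z)|\le V:=CM^2$ for all $z\in S$, with $C$ depending only on $R$ and $\gamma$.

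Finally I would invoke Lemma~\ref{2} with $L=T$, with the role of ``$\epsilon$'' played by $\epsilon^2$ and that of ``$V$'' by $CM^2$. It produces a function $\mu(k)$ obeying precisely the two lower bounds asserted in the statement — $\mu(k)\ge\tfrac12$ on $(T,2^{\frac14}T)$ and $\mu(k)\ge\tfrac1\pi((k/T)^4-1)^{-\frac12}$ on $(2^{\frac14}T,\infty)$ — such that $|J(k)|\le CM^2(\epsilon^2)^{\mu(k)}$ for all $k\in(T,\infty)$. Multiplying through by $e^{(2\gamma+1)k}$ and unwinding the definition of $J$ gives $|I(k)|\le CM^2 e^{(2\gamma+1)k}\epsilon^{2\mu(k)}$ on $(T,\infty)$, which is \eqref{CM}. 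The only genuinely delicate point is the choice of the rescaling exponent: it must strictly exceed $2\gamma$ so that $2\gamma\Rt z-(2\gamma+1)\Rt z<0$ forces the sectorial bound to decay, yet stay linear in $z$ so that $J$ remains entire and Lemma~\ref{2} applies; $(2\gamma+1)$ is the convenient choice, and it is exactly the exponent that resurfaces in \eqref{CM}. The remaining steps — monotonicity of $I_1,I_2$ on the positive axis and the already-established bounds \eqref{eIk}, \eqref{IM} — are routine bookkeeping.
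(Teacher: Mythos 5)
Your proposal is correct and follows essentially the same route as the paper: rescale to $J(k)=I(k)e^{-(2\gamma+1)k}$, bound it by $CM^2$ on the sector $S$ via the estimates \eqref{eIk}, \eqref{IM} (using $\Rt z\ge |z|/\sqrt 2$ there) and by $\epsilon^2$ on $(0,T]$, then apply Lemma~\ref{2} with $L=T$ and multiply back by $e^{(2\gamma+1)k}$. In fact you spell out the steps (monotonicity of $I$ on $(0,T]$, boundedness of $|z|e^{-|z|/\sqrt2}$) more carefully than the paper's own sketch, which contains only minor notational slips ($L=k$ instead of $L=T$) at these points.
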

\begin{proof}
Let the sector  $S\subset\C$ be given in Lemma \ref{2}. Observe that $|k_2|\leq k_1$ when $k\in S$. It follows from (\ref{IM}) that 
\[|I(k)e^{-(2\gamma+1)k}|\leq CM^2,\]
where $C>0$ depends on $R$ and $\gamma$.
Since $\epsilon^2=\int_0^T\int_{\Gamma_0}|\partial_\nu\partial_tu(x,t)|^2ds(x)dt$, we have
\begin{equation}\label{I1k}
|I(k)|\leq\epsilon^2, \quad k\in[0, T].
\end{equation}
Then applying Lemma \ref{2} with $L=k$ to be function $J(s):=I(k)e^{-(2\gamma+1)k}$,
we conclude that there exists a function $\mu(k)$ satisfying 
\begin{equation}
\nonumber
\begin{cases}
\mu (k)  \geq\frac{1}{2},\ \ &k\in (T,\ 2^{\frac{1}{4}}T), \\
\mu (k)  \geq\frac{1}{\pi}((\frac{s}{T})^4-1)^{-\frac{1}{2}},\ \ & k\in (2^{\frac{1}{4}}T, \ \infty)
\end{cases}
\end{equation}
such that
\begin{equation}
|I(k)e^{-(2\gamma+1)k}|\leq CM^2\epsilon^{2\mu(k)},
\end{equation}
where $T<k<+\infty$ and $C$ depends on $R$ and $\gamma$. Thus we complete the proof.
\end{proof}

Now we show the proof of Theorem \ref{1}.

If $\epsilon\geq e^{-1}$, then the estimate is 
obvious.
If $\epsilon<e^{-1}$, we discuss (\ref{f}) in two cases.

Case (i):  $2^{\frac{1}{4}}((2\gamma+3)\pi)^{\frac{1}{3}} T^{\frac{1}{3}}< |\ln \epsilon|^{\frac{1}{4}}$. 
Choose $s_0=\frac{1}{((2\gamma+3)\pi)^{\frac{1}{3}}}T^{\frac{2}{3}}|\ln \epsilon|^{\frac{1}{4}}$. It is easy to get $s_0>2^{\frac{1}{4}}T$, then 
\[-\mu(s_0)\leq-\frac{1}{\pi}((\frac{s_0}{T})^4-1)^{-\frac{1}{2}}\leq-\frac{1}{\pi}(\frac{T}{s_0})^2.\]
A direct application of  estimate (\ref{CM}) shows that 
 \begin{eqnarray*}
 |I(s_0)|&\leq& CM^2\epsilon^{2\mu(s_0)}e^{(2\gamma+3)s_0}\\
        &\leq& CM^2 e^{(2\gamma+3)s_0-2\mu(s_0)|\ln\epsilon|}\\
           &\leq& CM^2 e^{(2\gamma+3)s_0-\frac{2|\ln \epsilon|}{\pi}(\frac{T}{s_0})^2 }\\
           &=& CM^2 e^{-2(\frac{(2\gamma+3)^2}{\pi})^{\frac{1}{3}}T^{\frac{2}{3}}|\ln \epsilon|^{\frac{1}{2}}(1- \frac{1}{2}|\ln \epsilon|^{-\frac{1}{4}})},
 \end{eqnarray*}
where $C>0$ depends on $R$ and $\gamma$. 
Noting that $1-\frac{1}{2}|\ln \epsilon|^{-\frac{1}{4}}>\frac{1}{2}$ and $(\frac{(2\gamma+3)^2}{\pi})^{\frac{1}{3}}>1$, we have 
\[|I(s_0)|\leq CM^2 e^{-T^{\frac{2}{3}}|\ln \epsilon|^{\frac{1}{2}}}.\]
Using the inequality $e^{-t}\leq C\frac{1}{t^{3\alpha}}$ for $t>0$, we get
\begin{equation}\label{ess}
  |I(s_0)|\leq C\frac{M^2 }{(T^2|\ln \epsilon|^{\frac{3}{2}})^{\alpha}}.
\end{equation}
Hence there exists $C>0$ depending on $R$, $\alpha$ and $\gamma$ such that
\begin{equation}\label{ess1}
  \begin{split}
    \|f\|^2_{L^2{(\R^3)}}&= I(s_0)+\int_{s_0}^{\infty}\int_{\partial B_R}\big(|\partial^2_tu(x,t)|^2+|\partial_\nu\partial_tu(x,t)|^2\big)ds(x)dt\\
    &\leq C\frac{M^2 }{(T^2|\ln \epsilon|^{\frac{3}{2}})^{\alpha}}+C\frac{M^2 }{(T^{\frac{2}{3}}|\ln \epsilon|^{\frac{1}{4}})^{\alpha}}\\
    &\leq C\frac{M^2 }{(T^{\frac{2}{3}}|\ln \epsilon|^{\frac{1}{4}})^{\alpha}}.
  \end{split}
\end{equation}
Since $T^2|\ln \epsilon|^{\frac{3}{2}} \geq T^{\frac{2}{3}}|\ln \epsilon|^{\frac{1}{4}}$ when $T>1$ and $|\ln\epsilon|\geq1$.

Case (ii):  $|\ln \epsilon|^{\frac{1}{4}}\leq 2^{\frac{1}{4}}((2\gamma+3)\pi)^{\frac{1}{3}} T^{\frac{1}{3}}$. In this case we choose $s_0=T$, then $s_0\geq 2^{-\frac{1}{4}}((2\gamma+3)\pi)^{-\frac{1}{3}}T^{\frac{2}{3}}|\ln \epsilon|^{\frac{1}{4}}$. 
Using  estimate (\ref{I1k}), we obtain
\begin{equation}\label{ess2}
  \begin{split}
    \|f\|^2_{L^2{(\R^3)}}&= I(s_0)+\int_{s_0}^{\infty}\int_{\partial B_R}\big(|\partial^2_tu(x,t)|^2+|\partial_\nu\partial_tu(x,t)|^2\big)ds(x)dt\\
&\leq C\big(\epsilon^2+\frac{ M^2}{(T^{\frac{2}{3}}|\ln \epsilon|^{\frac{1}{4}})^{\alpha}}\big),
\end{split}
\end{equation}
where $C>0$ depends on $\alpha$, $\gamma$ and $\alpha$.
Combining (\ref{ess1}) and (\ref{ess2}), we finally get
\begin{equation}\nonumber
 \|f\|^2_{L^2{(\R^3)}}\leq C\big(\epsilon^2+\frac{ M^2}{(T^{\frac{2}{3}}|\ln \epsilon|^{\frac{1}{4}})^{\alpha}}\big),
\end{equation}
where $C>0$ depends on $R$, $\gamma$ and $\alpha$.

\section*{Acknowledgment}
The work of Suliang Si is supported by  the Shandong Provincial Natural Science Foundation (No. ZR2022QA111).


\begin{thebibliography}{100}

\bibitem{[3]}S. Arridge, Optical tomography in medical imaging, Inverse Problems, 15 (1999), 41-93.

\bibitem{[2]}H. Ammari, G. Bao and J. Fleming, Inverse source problem for Maxwell's equation in magnetoencephalography, SIAM J. Appl. Math., 62 (2002),
1369-1382.
 
\bibitem{ACTV2013} S. Acosta, S. Chow, J. Taylor and V. Villamizar, On the multi-frequency inverse source
problem in heterogeneous media, Inverse Problems, 28 (2013), 075013. 

\bibitem{AHS2024} I. A\"icha, G. Hu and S. Si, Increasing stability for inverse source problem with
limited-aperture far ﬁeld data at multi-frequencies, J. Math. Anal. Appl.,  540 (2024), 128650.



\bibitem{[BLRX]} G. Bao, S. Lu, W. Rundell and B. Xu, A recursive algorithm for multi-frequency acoustic inverse source
problems, SIAM J. Numer. Anal., 53 (2015), 1608-1628. 

\bibitem{[BT20]} G. Bao and F. Triki, Stability for the multifrequency inverse medium problem, J. Differ. Equ, 269 (2020), no. 9, 7106-7128. 

\bibitem{[5]} G. Bao, J. Lin, F. Triki,  A multi- frequency inverse source problem, J. Differ. Equ, 249 (2010), 3443-3465.

\bibitem{BLZ2020} G. Bao, P. Li and Y. Zhao, Stability for the inverse source problems in elastic and elec-
tromagnetic waves, J. Math. Pures Appl., 134 (2020), 122–178.

\bibitem{[1]}C. A. Balanis, Antenna Theory - Analysis and Design, Wiley, Hoboken, NJ, 2005.

\bibitem{[BMBI]} \textsc{M. Bellassoued, I. Ben A\"icha, }{ Stable determination outside a cloaking region of two time-dependent coefficients in an hyperbolic equation from Dirichlet to Neumann map,}  Mathematical Analysis and Applications, (2017), 46-76, Volume 449, Issue 1.

\bibitem{BC} N. Bleistein and J. Cohen, Nonuniqueness  in  the  inverse  source  problem  in  acoustics  and electromagnetics, J. Math. Phys., 18 (1977), 194--201.

\bibitem{CY2006} M. Choulli and M. Yamamoto, Some stability estimates in determining sources and coef-
ficients, J. Inverse Ill-Posed Probl., 14 (2006), 355–373.

\bibitem{[Horst]} A.P. Choudhury and H. Heck, Increasing stability for the inverse problem for the
Schr\"odinger equation, Math. Methods Appl. Sci, 41 (2018), 606-614.

\bibitem{CIL2016} J. Cheng, V. Isakov and S. Lu, Increasing stability in the inverse source problem with
many frequencies, J. Differ. Equ, 260 (2016), 4786–4804.

\bibitem{[15]} V. Isakov and S. Lu,  Increasing stability in the inverse source problem with attenuation
and many frequencies,  SIAM J. Appl. Math., 18 (2018), 1-18.

\bibitem{[16]} V. Isakov and S. Lu, Inverse source problems without (pseudo)convexity assumptions,
Inverse Problems and Imaging, 12 (2018), 955-970.

\bibitem{[U]}V. Isakov, S. Nagayasu, G. Uhlmann and J-N Wang,  Increasing stability of the inverse boundary value problem for the Schr\"odinger equation,  In Inverse problems and
applications, volume 615 of Contemp. Math.,(2014) pages 131–141. Amer. Math. Soc., Providence, RI
.  

\bibitem {[6]} V. Isakov,  Inverse Problems for Partial Differential Equations ( Springer-Verlag,
New York), (2017).

\bibitem {Isa1990} V. Isakov, Inverse source problems, (Providence, RI: American Mathematical Society), 34
(1990).

\bibitem{[8]} M. N. Entekhabi,  Increasing stability in the two dimensional inverse source scattering problem with attenuation and many frequencies, Inverse Problems, 34 (2018), 115001, .

\bibitem{[11]} M. N. Entekhabi and V. Isakov, Increasing stability in acoustic and elastic inverse source
problems, SIAM J. Math. Anal., 52 (2020),  5232-5256.

\bibitem{[12]} M. Eller and Valdivia,  Acoustic source identification using multiple frequency
information, Inverse Problems, 25 (2009), 115005 .

\bibitem{HKP2005} K.-H. Hauer, L. Kühn and R. Potthast, On uniqueness and non-uniqueness for current
reconstruction from magnetic fields, Inverse Problems, 21 (2005), 955–967.

\bibitem{JLY2017} D. Jiang, Y. Liu and M. Yamamoto, Inverse source problem for the hyperbolic equation
with a time-dependent principal part, J. Differ. Equ, 262 (2017), 653–681.

\bibitem{K2002} M. V. Klibanov, Carleman estimates and inverse problems: uniqueness and convexification
of multiextremal objective functions, (2002), 219–252.

\bibitem{[4]}P. Stefanov and G. Uhlmann, Themoacoustic tomography arising in brain imaging, Inverse Problems, 27 (2011), 075011.

\bibitem{[20]} P. Li, G. Yuan, Increasing stability for the inverse source scattering problem with
multi-frequencies, Inverse Problems and Imaging, 11 (2017), 745-759.

\bibitem{[Vessela]}{S. Vessella}, A continuous dependence result in the analytic continuation problem, Forum math, 11 (1999), 695-703.

\bibitem{S2024} S. Si, Increasing stability in the n-dimensional inverse source problem with multifrequen-
cies. Math. Meth. Appl. Sci. (2024), 1–10, DOI 10.1002/mma.10169.



\bibitem{Y1995} M. Yamamoto, Stability, reconstruction formula and regularization for an inverse source
hyperbolic problem by a control method, Inverse Probl, 11 (1995), 481–496.

\bibitem{Y1999} M. Yamamoto, Uniqueness and stability in multidimensional hyperbolic inverse problems,
J. Math. Pures Appl., 78 (1999), 65–98.

\bibitem{ZL2019} Y. Zhao and P. Li, Stability on the one-dimensional inverse source scattering problem in
a two-layered medium, Appl. Anal, 98 (2019), 682–692.




\end{thebibliography}
\end{document}